\theoremstyle{plain}
\newtheorem{theorem}{Theorem}[section]
\newtheorem{proposition}[theorem]{Proposition}
\newtheorem{remark}[theorem]{Remark}
\newtheorem{definition}[theorem]{Definition}
\newtheorem{example}[theorem]{Example}
\newtheorem{corollary}[theorem]{Corollary}
\crefname{equation}{}{}
\newcommand{\periodafter}[1]{#1.}
\titleformat{\section}[runin]{\normalfont\bfseries}{\textbf{\thesection.}}{1ex}{\periodafter}
\titleformat{\subsection}[runin]{\normalfont\bfseries}{\textbf{\thesubsection.}}{1ex}{\periodafter}
\titleformat{\subsubsection}[runin]{\normalfont\bfseries}{\textbf{\thesubsubsection.}}{1ex}{\periodafter}
\titleformat{\paragraph}[runin]{\normalfont\itshape}{}{1ex}{\periodafter}
\newcommand{\q}{\tilde{q}}
\newcommand{\SP}[2]{(#1, #2)}
\newcommand{\DP}[2]{\langle #1, #2\rangle}
\newcommand{\Ell}{\mathcal{E}}
\newcommand{\R}{\mathbb{R}}
\newcommand{\N}{\mathbb{N}}
\newcommand{\hnS}[1]{\hspace{-#1pt}}
\newcommand{\mean}[1]{\ensuremath{\left\{\hnS{2.5}\left\{#1\right\}\hnS{2.5}\right\}}}
\newcommand{\jump}[1]{\ensuremath{\left[\hnS{1.25}\left[#1\right]\hnS{1.25}\right]}}
\newcommand{\norm}[1]{\ensuremath{{\left|\hnS{1.25}\left|#1\right|\hnS{1.25}\right|}}}
\newcommand{\Params}{\mathcal{P}}
\newcommand{\eps}{\varepsilon}
\newcommand{\grid}{\tau_h}
\newcommand{\Grid}{\mathcal{T}_H}
\newcommand{\faces}{\mathcal{F}_h}
\newcommand{\Faces}{\mathcal{F}_H}
\newcommand{\restrict}[2]{{\ensuremath{\left. #1\right|_{#2}}}}
\newcommand{\gradient}[1]{{\nabla\hnS{1.25} #1}}
\newcommand{\Pk}{\mathbb{P}}
\newcommand{\red}{\ensuremath{\textnormal{red}}}
\newcommand{\DUNE}{\texttt{DUNE}\xspace}
\newcommand{\pymor}{\texttt{pyMOR}\xspace}
\newcommand\Cpp{C\nolinebreak[4]\hspace{-.05em}\raisebox{.4ex}{\relsize{-3}{\textbf{++}}}\xspace}
\newcommand{\enorm}{\@ifstar\@enorms\@enorm}
\newcommand{\@enorms}[1]{%
  \left|\mkern-1.5mu\left|\mkern-1.5mu\left|
   #1
  \right|\mkern-1.5mu\right|\mkern-1.5mu\right|
}
\newcommand{\@enorm}[2][]{%
  \mathopen{#1|\mkern-1.5mu#1|\mkern-1.5mu#1|}
  #2
  \mathclose{#1|\mkern-1.5mu#1|\mkern-1.5mu#1|}
}
\begin{document}

\date{}
\title{True Error Control for the Localized Reduced Basis Method for Parabolic Problems\footnotemark[4]}

\author{Mario Ohlberger\footnotemark[5] \and Stephan Rave\footnotemark[5] \and Felix Schindler\footnotemark[5]}

\maketitle

\renewcommand{\thefootnote}{\fnsymbol{footnote}}
\footnotetext[4]{This work has been supported by the German Federal Ministry of Education and Research (BMBF) under contract number 05M13PMA.}
\footnotetext[5]{Institute for Computational and Applied Mathematics, University of M\"unster, Einsteinstrasse 62, 48149 M\"unster, Germany, \url{{mario.ohlberger, stephan.rave, felix.schindler}@uni-muenster.de}}

\begin{small}\textbf{Abstract.}
  We present an abstract framework for a posteriori error estimation for approximations of scalar parabolic evolution equations, based on elliptic reconstruction techniques \cite{MakridakisNochetto2003,LakkisMakridakis2006,DemlowLakkisEtAl2009,GeorgoulisLakkisEtAl2011}.
  In addition to its original application (to derive error estimates on the discretization error), we extend the scope of this framework to derive offline/online decomposable a posteriori estimates on the model reduction error in the context of Reduced Basis (RB) methods.
  In addition, we present offline/online decomposable a posteriori error estimates on the full approximation error (including discretization as well as model reduction error) in the context of the localized RB method \cite{OS2015}.
  Hence, this work generalizes the localized RB method with true error certification to parabolic problems.
  Numerical experiments are given to demonstrate the applicability of the approach.
\end{small}

\section{Introduction}
\label{sec:introduction}

We are interested in efficient and certified numerical approximations of pa\-ra\-bo\-lic parametric problems, such as: given a Gelfand triple of suitable Hilbert spaces $Q \subset H \subset Q'$, an end time $T_\text{end} > 0$, initial data $p_0 \in Q$ and right hand side $f \in H$, for a parameter $\mu \in \Params$ find $p(\cdot; \mu) \in L^2(0, T_\text{end}; Q)$ with $\partial_t p(\cdot; \mu) \in L^2(0, T_\text{end}; Q')$, such that $p(0; \mu) = p_0$ and
\begin{align}
        \DP{\partial_t p(t; \mu)}{q} + b\big(p(t; \mu), q; \mu\big) &= (f, q)_H &&\text{for all } q \in Q,
        \label{eq:intro:continuous_problem}
\end{align}
where $\Params \subset \R^\rho$ for $\rho \in \N$ denotes the set of admissible parameters and $b$ denotes a parametric elliptic bilinear form (see Section \ref{sec:general_framework} for details).

We consider grid-based approximations $p_h(\mu) \in Q_h$ of $p(\mu) \in Q$, obtained by formulating \eqref{eq:intro:continuous_problem} in terms of a discrete approximation space $Q_h \subset H$ (think of Finite Elements or Finite Volumes) where $b$ is replaced by a discrete counterpart acting on $Q_h$ (e.g.~in case of nonconforming approximations).

Efficiency of such an approximation for a single parameter is usually associated with minimal computational effort, obtained by adaptive grid refinement using localizable and reliable error estimates (see \cite{Ver2013} and the references therein).
For parametric problems, however, where one is interested in approximating \eqref{eq:intro:continuous_problem} for many parameters, efficiency is related to an overall computational cost that is minimal compared to the combined cost of separate approximations for each parameter.
To this end one employs model reduction with reduced basis (RB) methods, where one usually considers a common approximation space $Q_h$ for all parameters (with the notable exceptions \cite{ASU2014,Yan2015}) and where one iteratively builds a reduced approximation space $Q_\red \subset Q_h$ by an adaptive greedy search, the purpose of which is to capture the manifold of solutions of \eqref{eq:intro:continuous_problem}: $\{ p(t; \mu) \in Q \,|\, t \in [0, T_\text{end}], \mu \in \Params \}$; we refer to the monographs \cite{PR2006,HesthavenRozzaEtAl2016,QuarteroniManzoniEtAl2016} and the references therein.
One obtains a reduced problem by Galerkin projection of all quantities onto $Q_\red$ and, given a suitable parametrization of the problem, the assembly of the reduced problem allows for an offline/online decomposition such that a reduced solution $p_\red(\mu) \in Q_\red$ for a parameter $\mu \in \Params$ can be efficiently computed with a computational effort independent of the dimension of $Q_h$.
To assess the quality of the reduced solution and to steer the greedy basis generation, RB methods traditionally rely on residual based a posterior error estimates on the \emph{model reduction error} $e_\red(\mu) := p_h(\mu) - p_\red(\mu)$, $\norm{e_\red(\mu)} \leq \eta_\red(\mu)$, with the drawback that usually no information on the \emph{discretization error} $e_h(\mu) := p(\mu) - p_h(\mu)$ is available during the online phase of the computation.

In contrast, we are interested in approximations of \eqref{eq:intro:continuous_problem} which are efficient in the parametric sense as well as certified in the sense that we have access to an efficiently computable estimate on the \emph{full approximation error} $e_{h, \red}(\mu) := p(\mu) - p_\red(\mu)$, including the discretization as well as the model reduction error: $\norm{e_{h, \red}(\mu)} \leq \eta_{h, \red}(\mu)$.

For elliptic problems such an estimate is available for the localized RB multiscale method (LRBMS) \cite{AHKO2012,OS2015}, the idea of which is to couple spatially localized reduced bases associated with subdomains of the physical domain.
In addition to computational benefits due to the localization, the LRBMS also allows to adaptively enrich the local reduced bases online by solving local corrector problems, given a localizable error estimate.
Apart from the LRBMS \cite{OS2014}, we are only aware of \cite{ASU2014} and \cite{Yan2015}, where the full approximation error is taken into account in the context of RB methods.

In an instationary setting, localized RB methods were first applied in the context of two-phase flow in porous media \cite{KFH+2015} and to parabolic problems such as \eqref{eq:intro:continuous_problem} in the context of Lithium-Ion Battery simulations \cite{ORS2016}, yet in either case without error control.
In contrast, here we present the fully certified localized RB method for parabolic problems by equipping it with suitable a posteriori error estimates.
As argued above, it is beneficial to have access to several error estimates which can be evaluated efficiently during the online phase: for instance to later enable online adaptive basis enrichment, one could (i) solve local corrector problems, given $\eta_\red$, whenever the reduced space is not rich enough; or one could (ii) locally adapt the grid, given $\eta_{h, \red}$, whenever $Q_h$ is insufficient.

Therefore, we present a general framework for a posteriori error estimation for parabolic problems, which will enable us to obtain either of the above estimates.
It is based on the elliptic reconstruction technique, introduced for several discretizations and norms in \cite{MakridakisNochetto2003,LakkisMakridakis2006,DemlowLakkisEtAl2009,GeorgoulisLakkisEtAl2011}. In this contribution we reformulate this approach in an abstract setting, allowing for a novel application in the context of RB methods.
In particular, this technique allows to reuse existing a posteriori error estimates for elliptic diffusion problems.

\section{General Framework for A Posteriori Error Estimates}
\label{sec:general_framework}

In the following presentation we mainly follow \cite{GeorgoulisLakkisEtAl2011}, reformulating it in an abstract Hilbert
space setting and slightly extending it by allowing non-symmetric bilinear forms.
We drop the parameter dependency in this section to simplify the notation.

\begin{definition}[Abstract parabolic problem]
Let $Q$ be a Hilbert space, densely embedded in another Hilbert space $H$ (possibly $Q = H$),
and let $\widetilde{Q} \subseteq H$ be a finite dimensional approximation space for $Q$,
not necessarily contained in $Q$.
Denote by $\SP{\cdot}{\cdot}$, $\|\cdot\|$ the $H$-inner product and the norm induced by it.

Let $f \in H$, and let $b: (Q + \widetilde{Q}) \times (Q + \widetilde{Q}) \to \R$ be a bilinear
form which is continuous and coercive on $Q$.
Let further $\enorm{\cdot}$ be a norm over $Q + \widetilde{Q}$, which coincides with the square root of the symmetric part of $b$ over $Q$.

Our goal is to bound the error $e(t) := p(t) - \tilde{p}(t)$ between the true (analytical) solution $p \in L^2(0, T_\text{end}; Q)$,
$\partial_t p \in L^2(0, T_\text{end}; Q^\prime)$ of \eqref{eq:intro:continuous_problem},
where the duality pairing $\DP{\partial_t p(t)}{q}$ is induced by the $H$-scalar product
via the Gelfand triple $Q \subseteq H = H^\prime \subseteq Q^\prime$,
and the $\widetilde{Q}$-Galerkin approximation $\tilde{p} \in L^2(0, T_\text{end}, \widetilde{Q})$, $\partial_t \tilde{p} \in L^2(0, T_\text{end}, \widetilde{Q})$, solution of
\begin{equation}\label{eq:reduced_problem}
	\SP{\partial_t \tilde{p}(t)}{ \tilde{q}} + b(\tilde{p}(t), \tilde{q}) = \SP{f}{ \tilde{q}} \qquad\text{for all } \tilde{q} \in \widetilde{Q}.
\end{equation}
\end{definition}

\begin{definition}[Elliptic reconstruction]
  \label{def:elliptic_reconstruction}
	Denote by $\widetilde{\Pi}$ the $H$-orthogonal projection onto $\widetilde{Q}$.
	For $\tilde{q} \in \widetilde{Q}$, define the elliptic reconstruction $\Ell(\tilde{q}) \in Q$ of $\tilde{q}$ to be the unique solution of the
	variational problem
	\begin{equation}\label{eq:elliptic_reconstruction}
		b(\Ell(\tilde{q}), q^\prime) = \SP{B(\tilde{q}) - \widetilde{\Pi}(f) + f}{q^\prime} \qquad\text{for all } q^\prime \in Q,
	\end{equation}
	where $B(\q) \in \widetilde{Q}$ is the $H$-inner product Riesz representative  of the functional $b(\q, \cdot)$, i.e.,
	$\SP{B(\q)}{\tilde{q}^\prime}$ $= b(\tilde{q}, \tilde{q}^\prime)$ for all $\tilde{q}^\prime \in \widetilde{Q}$.
	Note that $\Ell(\tilde{q})$ is well-defined, due to the coercivity of $b$ on $Q$.
\end{definition}

The following central property of the elliptic reconstruction follows immediately from its definition:

\begin{proposition}\label{thm:elliptic_reconstruction}
	$\tilde{q}$ is the $\widetilde{Q}$-Galerkin approximation of the solution $\Ell(\tilde{q})$ of the weak problem
	\cref{eq:elliptic_reconstruction} in the sense that $\tilde{q}$ satisfies
	\begin{equation*}
		b(\tilde{q}, \tilde{q}^\prime) = \SP{\tilde{w} - \widetilde{\Pi}(f) + f}{\tilde{q}^\prime} \qquad\text{for all } \tilde{q}^\prime \in \widetilde{Q}.
	\end{equation*}
\end{proposition}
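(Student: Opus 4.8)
The plan is to verify the claimed identity by direct substitution, using only the definitions of $\Ell(\tilde q)$, $B(\tilde q)$, and $\widetilde\Pi$. First I would set $\tilde w := B(\tilde q)$ (matching the statement, which writes $\tilde w$ for the Riesz representative) and recall the defining relation $\SP{B(\tilde q)}{\tilde q'} = b(\tilde q, \tilde q')$ for all $\tilde q' \in \widetilde Q$. Then for any $\tilde q' \in \widetilde Q \subseteq H$ we compute $\SP{\tilde w - \widetilde\Pi(f) + f}{\tilde q'} = \SP{B(\tilde q)}{\tilde q'} + \SP{f - \widetilde\Pi(f)}{\tilde q'}$. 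The first summand equals $b(\tilde q, \tilde q')$ by the Riesz property, and the second vanishes because $f - \widetilde\Pi(f)$ is $H$-orthogonal to $\widetilde Q$ by definition of the orthogonal projection $\widetilde\Pi$ and $\tilde q' \in \widetilde Q$. Hence $\SP{\tilde w - \widetilde\Pi(f) + f}{\tilde q'} = b(\tilde q, \tilde q')$, which is exactly the asserted equation.

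This already gives the stated equation, and it is essentially the whole content of the proposition: the phrase ``$\tilde q$ is the $\widetilde Q$-Galerkin approximation of $\Ell(\tilde q)$'' is just the interpretation that, testing the weak problem \cref{eq:elliptic_reconstruction} defining $\Ell(\tilde q)$ against test functions from $\widetilde Q$ rather than from $Q$ (which is legitimate only in the sense that the right-hand side still makes sense, since $b$ is defined on $(Q+\widetilde Q)\times(Q+\widetilde Q)$), one recovers precisely the problem that $\tilde q$ solves. I would make this explicit by noting that plugging $q' = \tilde q' \in \widetilde Q$ into the left side of \eqref{eq:elliptic_reconstruction} would give $b(\Ell(\tilde q), \tilde q')$, whereas the displayed identity has $b(\tilde q, \tilde q')$ on the left; so the claim is really that $\tilde q$ and $\Ell(\tilde q)$ have the same ``$\widetilde Q$-Galerkin data'' $\tilde w - \widetilde\Pi(f) + f$, which is the standard way the elliptic reconstruction is characterized.

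I do not anticipate a genuine obstacle here — the result is labeled as following ``immediately from the definition'' and the argument is a two-line orthogonality computation. The only point requiring a little care is bookkeeping of where each term lives: $B(\tilde q) \in \widetilde Q$, $\widetilde\Pi(f) \in \widetilde Q$, and $f \in H$, so $\tilde w - \widetilde\Pi(f) + f \in H$ and all $H$-inner products against $\tilde q' \in \widetilde Q$ are well-defined; and one must use the orthogonality $\SP{f - \widetilde\Pi(f)}{\tilde q'} = 0$ in the correct direction. With that in place the proof is complete.
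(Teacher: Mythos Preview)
Your proposal is correct and matches the paper's approach: the paper simply states that the proposition ``follows immediately from its definition'' without giving a proof, and your two-line argument (use $\SP{B(\tilde q)}{\tilde q'} = b(\tilde q,\tilde q')$ and $\SP{f - \widetilde\Pi(f)}{\tilde q'} = 0$ for $\tilde q' \in \widetilde Q$) is precisely the intended verification.
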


Assume that for each $t$ we have a decomposition $\tilde{p}(t) =: \tilde{p}^c(t) + \tilde{p}^d(t)$ (not necessarily unique) where $\tilde{p}^c(t) \in Q$,
$\tilde{p}^d(t) \in \widetilde{Q}$ are the conforming and non-conforming parts of $\tilde{p}(t)$.
	We consider the following error quantities:
	\begin{align*}
		\rho(t)&:=p(t) - \Ell(\tilde{p}(t)), & \varepsilon(t) &:= \Ell(\tilde{p}(t)) - \tilde{p}(t), \\
		e^c(t) &:= p(t) - \tilde{p}^c(t), & \varepsilon^c(t)&:= \Ell(\tilde{p}(t)) - \tilde{p}^c(t).
	\end{align*}

\begin{theorem}[Abstract semi-discrete error estimate]\label{thm:abstract_estimate}
Let $C:=(2\enorm{b}^2 + 1)^{1/2}$, where $\enorm{b}$ denotes the continuity constant of $b$ on $Q$ w.r.t.~$\enorm{\cdot}$, then
\begin{align*}\label{eq:abstract_estimate}
  \|e\|_{L^2(0, T_\text{end}; \enorm{\cdot})}\leq &\|e^c(0)\| + \sqrt{3} \|\partial_t \tilde{p}^d\|_{L^2(0, T_\text{end}; \enorm{\cdot}_{Q,-1})} \\
  & + (C+1)\cdot\|\varepsilon\|_{L^2(0, T_\text{end}; \enorm{\cdot})} + C\cdot\|\tilde{p}^d\|_{L^2(0, T_\text{end}; \enorm{\cdot})}.
\end{align*}
\end{theorem}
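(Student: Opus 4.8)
The plan is to exploit the classical splitting $e = p - \tilde p = \rho + \varepsilon$ and then further $\rho = e^c - \varepsilon^c$ wherever the conforming/non-conforming decomposition is needed, estimating each piece. The key observation, which makes the elliptic reconstruction technique work, is contained in \cref{thm:elliptic_reconstruction}: since $\tilde p(t)$ is the $\widetilde Q$-Galerkin approximation of $\Ell(\tilde p(t))$, subtracting the Galerkin equation \eqref{eq:reduced_problem} from the weak form \eqref{eq:elliptic_reconstruction} for the reconstructed solution kills the spatial term against test functions in $\widetilde Q$ and leaves only the time-derivative contribution. More precisely, I expect to derive an error equation of the form
\begin{equation*}
  \SP{\partial_t \rho(t)}{q} + b(\rho(t), q) = -\SP{\partial_t \varepsilon(t)}{q} + \SP{\partial_t \tilde p^d(t)}{q} \qquad \text{for appropriate } q,
\end{equation*}
by using $p$ solves the continuous problem and $\Ell(\tilde p)$ solves the elliptic reconstruction problem, the right-hand sides matching by construction of $B(\tilde q)$ and the $\widetilde\Pi(f)-f$ correction term. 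The appearance of $\partial_t \tilde p^d$ rather than $\partial_t \tilde p$ is the point where the decomposition enters: $\tilde p^c \in Q$ can be handled conformingly, while only the non-conforming remainder $\tilde p^d$ needs the weaker $\enorm{\cdot}_{Q,-1}$-type control.

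First I would test this error equation with $q = e^c(t)$ (or with $\rho(t)$ after re-expressing through $e^c$ and $\varepsilon^c$), use the coercivity of $b$ on $Q$ to bound $b(\rho,\rho)$ from below by $\enorm{\rho}^2$, and recognize $\SP{\partial_t \rho}{\rho} = \tfrac12 \tfrac{d}{dt}\|\rho\|^2$ — this is the standard energy argument. The terms involving $\partial_t \varepsilon$ and $\partial_t \tilde p^d$ are then moved to the right-hand side; the $\partial_t \varepsilon$ term should be integrated by parts in time or absorbed using that $\varepsilon = \Ell(\tilde p) - \tilde p$, trading a time derivative of $\varepsilon$ for the $\enorm{\cdot}$-norm of $\varepsilon$ itself at the cost of the continuity constant $\enorm{b}$ — this is precisely why $C = (2\enorm{b}^2+1)^{1/2}$ shows up, squaring and collecting the continuity factor. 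The $\partial_t \tilde p^d$ term is paired against a test function and estimated by the dual norm $\enorm{\cdot}_{Q,-1}$, yielding the $\sqrt 3 \|\partial_t \tilde p^d\|_{L^2(0,T_\text{end};\enorm{\cdot}_{Q,-1})}$ contribution after a Young-type split of the three right-hand-side terms (hence the factor $\sqrt 3$).

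Having bounded $\|\rho\|_{L^2(0,T_\text{end};\enorm{\cdot})}$ and the pointwise-in-time $\|\rho(t)\|$ via Gr\"onwall (with $\|\rho(0)\|$ controlled by $\|e^c(0)\|$ since $\rho(0) = e^c(0) - \varepsilon^c(0)$ and the initial reconstruction error can be absorbed, or the problem is set up so $\tilde p(0)$ reconstructs $p_0$ suitably), I would finish by the triangle inequality $\enorm{e} \le \enorm{\rho} + \enorm{\varepsilon}$ and, where $\rho$ was expressed through the conforming parts, $\enorm{\rho} \le \enorm{e^c} + \enorm{\varepsilon^c}$, together with $\enorm{\varepsilon^c} \le \enorm{\varepsilon} + \enorm{\tilde p^d}$ since $\varepsilon^c = \varepsilon + \tilde p^d$. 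Collecting constants gives the stated $(C+1)\|\varepsilon\|$ and $C\|\tilde p^d\|$ terms. The main obstacle I anticipate is the careful bookkeeping around the non-conforming part: making sure the time-derivative term $\SP{\partial_t \tilde p^d}{\cdot}$ is legitimately estimated in the negative norm $\enorm{\cdot}_{Q,-1}$ (this requires knowing $b$ is only defined on $Q+\widetilde Q$ but coercive only on $Q$, so test functions must be chosen in $Q$), and tracking the continuity constant $\enorm{b}$ correctly through the Young's inequality splits so that exactly $C = (2\enorm{b}^2+1)^{1/2}$ — not some other constant — emerges; the rest is a textbook parabolic energy estimate.
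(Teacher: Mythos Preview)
Your overall architecture is right --- elliptic reconstruction, an energy argument, and the triangle inequalities $\enorm{e} \le \enorm{\rho} + \enorm{\varepsilon}$ and $\enorm{\varepsilon^c} \le \enorm{\varepsilon} + \enorm{\tilde{p}^d}$ at the end --- but the error equation you write down is not the one that actually holds, and this sends you on an unnecessary detour involving $\partial_t \varepsilon$. The correct error identity is simply
\[
  \DP{\partial_t e(t)}{q} + b(\rho(t), q) = 0 \qquad \text{for all } q \in Q,
\]
with $\partial_t e$ (not $\partial_t \rho$) on the left and \emph{nothing} on the right. Indeed, since $\tilde{p}$ solves \eqref{eq:reduced_problem} one has $B(\tilde{p}(t)) = \widetilde\Pi(f) - \partial_t \tilde{p}(t)$, so the right-hand side of \eqref{eq:elliptic_reconstruction} is just $\SP{f - \partial_t \tilde{p}(t)}{q}$; subtracting from the continuous equation gives the identity immediately.

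The choice of test function now matters. Testing with $\rho$ would force the split $\partial_t e = \partial_t \rho + \partial_t \varepsilon$ and make you confront $\partial_t \varepsilon = \partial_t \Ell(\tilde{p}) - \partial_t \tilde{p}$, which is precisely the term you propose to ``integrate by parts in time or absorb'' --- but there is no clean way to do that here, and it is unnecessary. Test instead with $q = e^c(t) \in Q$ and split the other way: $e = e^c - \tilde{p}^d$ gives $\DP{\partial_t e}{e^c} = \tfrac12 \partial_t \|e^c\|^2 - \SP{\partial_t \tilde{p}^d}{e^c}$, while $e^c = \rho + \varepsilon^c$ gives $b(\rho, e^c) = \enorm{\rho}^2 + b(\rho, \varepsilon^c)$. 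The continuity constant $\enorm{b}$ enters only through $|b(\rho,\varepsilon^c)| \le \enorm{b}\,\enorm{\rho}\,\enorm{\varepsilon^c}$, not from any manipulation of $\partial_t \varepsilon$. Three Young splits on the resulting right-hand side (pairing $\enorm{\rho}$ with $\enorm{\partial_t \tilde{p}^d}_{Q,-1}$ and with $\enorm{b}\,\enorm{\varepsilon^c}$, and pairing $\enorm{\varepsilon^c}$ with $\enorm{\partial_t \tilde{p}^d}_{Q,-1}$) yield \eqref{eq:proof_abstract_estimate_rho_inequality}; integrate over $(0,T_\text{end})$, drop $\|e^c(T_\text{end})\|^2 \ge 0$, take square roots, and apply the two triangle inequalities. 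No Gr\"onwall argument is needed.
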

\begin{proof}[cf.\ \cite{GeorgoulisLakkisEtAl2011}]
For each $q \in Q$, we have the error identity
\begin{equation}
	\DP{\partial_t e(t)}{q} + b(\rho(t), q) = 0,
        \label{eq:error_identity}
\end{equation}	
using the definition of $\rho$, the properties of the elliptic reconstruction and the fact, that $p$ solves \eqref{eq:intro:continuous_problem}.
Testing with $e^c(t)$ and applying Young's inequality then yields
\begin{equation}\label{eq:proof_abstract_estimate_rho_inequality}
	\partial_t \|e^c(t)\|^2  + \enorm{\rho(t)}^2 \leq 3 \enorm{\partial_t \tilde{p}^d(t)}_{Q,-1}^2 + (2\enorm{b}^2 +
	1)\cdot\enorm{\varepsilon^c(t)}^2.
\end{equation}
Hence, the claim follows by integrating \cref{eq:proof_abstract_estimate_rho_inequality} from $0$ to $T_\text{end}$ and using the triangle
inequalities $\enorm{e(t)} \leq \enorm{\rho(t)} + \enorm{\varepsilon(t)}$ and $\enorm{\varepsilon^c(t)} \leq \enorm{\varepsilon(t)} +
\enorm{\tilde{p}^d(t)}$.
\end{proof}

\begin{remark}
\label{remark:elliptic_estimate}
	According to Proposition~\ref{thm:elliptic_reconstruction}, the term $\enorm{\varepsilon(t)}$ can be bounded using any
	available a posteriori error estimator for the elliptic equation \cref{eq:elliptic_reconstruction}.
  The term $\enorm{\partial_t\tilde{p}^d(t)}_{Q,-1}$ can be bounded by $C_{H, Q}^b\|\partial_t\tilde{p}^d(t)\|$ using
  the Cauchy-Schwarz inequality, where $C_{H, Q}^b$ is a constant such that $\|q\| \leq C_{H, Q}^b\enorm{q}$ for all $q
  \in Q$.
\end{remark}

It is straightforward to modify the estimate in Theorem \ref{thm:abstract_estimate} for semi-discrete solutions $\tilde{p}(t)$ to
take the time discretization error into account:

\begin{corollary}\label{thm:fully_discrete_estimate}
  Let $\tilde{p} \in L^2(0, T_\text{end}, \widetilde{Q})$, $\partial_t \tilde{p} \in L^2(0, T_\text{end}, \widetilde{Q})$ be an arbitrary discrete approximation of
	$p(t)$, not necessarily satisfying \cref{eq:reduced_problem}.
	Let $\mathcal{R}_T[\tilde{p}](t) \in \widetilde{Q}$ denote the $\widetilde{Q}$-Riesz representative w.r.t.~the $H$-inner product
	of the time-stepping residual of $\tilde{p}(t)$, i.e.
	\begin{equation*}
		(\mathcal{R}_T[\tilde{p}](t), \tilde{q}) = \SP{\partial_t \tilde{p}(t)}{\tilde{q}} + b(\tilde{p}(t), \tilde{q}) - \SP{f}{\tilde{q}} \qquad\forall \tilde{q}
		\in \widetilde{Q}.
	\end{equation*}
	Then, with $C:=(3\enorm{b}^2 + 2)^{1/2}$, the following error estimate holds:
	\begin{equation}\label{eq:fully_discrete_estimate}
		\begin{aligned}
      \|e\|_{L^2(0, T_\text{end}; \enorm{\cdot})}
      & \leq \|e^c(0)\| + 2 \|\partial_t \tilde{p}^d\|_{L^2(0, T_\text{end}; \enorm{\cdot}_{Q,-1})} \\
      & \qquad + (C+1)\cdot\|\varepsilon\|_{L^2(0, T_\text{end}; \enorm{\cdot})} + C\cdot\|\tilde{p}^d\|_{L^2(0, T_\text{end}; \enorm{\cdot})} \\
      & \qquad + 2C_{H,Q}^b\cdot\|\mathcal{R}_T[\tilde{p}]\|_{L^2(0, T_\text{end}; H)}.
		\end{aligned}
	\end{equation}
\end{corollary}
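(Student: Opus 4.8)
The plan is to rerun the proof of Theorem~\ref{thm:abstract_estimate} and simply keep track of the one extra term that appears because $\tilde{p}$ need no longer satisfy \eqref{eq:reduced_problem}. Hypothesis \eqref{eq:reduced_problem} enters the proof of Theorem~\ref{thm:abstract_estimate} only to produce the error identity \eqref{eq:error_identity}, so I would first redo that computation: inserting the definitions of $\rho$ and of the elliptic reconstruction \eqref{eq:elliptic_reconstruction} into $\DP{\partial_t e(t)}{q}+b(\rho(t),q)$, using that $p$ solves \eqref{eq:intro:continuous_problem}, and using that $B(\tilde{p})$, $\widetilde{\Pi}(f)$ and $\partial_t\tilde{p}$ all lie in $\widetilde{Q}$ (so that their $H$-pairings against $q\in Q$ are unchanged if $q$ is replaced by $\widetilde{\Pi}q$), the right-hand side is now no longer $0$ but exactly minus the $H$-pairing of the time-stepping residual:
\[
\DP{\partial_t e(t)}{q} + b(\rho(t),q) = -\SP{\mathcal{R}_T[\tilde{p}](t)}{q}\qquad\text{for all }q\in Q .
\]

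Next I would test this identity with $e^c(t)\in Q$ and follow the derivation of \eqref{eq:proof_abstract_estimate_rho_inequality} line by line, the only new contribution being $-\SP{\mathcal{R}_T[\tilde{p}](t)}{e^c(t)}$. Writing $e^c=\rho+\varepsilon^c$ and using $b(\rho,\rho)=\enorm{\rho}^2$ together with $\DP{\partial_t e^c}{e^c}=\tfrac12\partial_t\|e^c\|^2$, the right-hand side has three contributions linear in $\enorm{\rho}$: the $\rho$-part of $\SP{\partial_t\tilde{p}^d}{e^c}$, the term $b(\rho,\varepsilon^c)$, and the $\rho$-part of $\SP{\mathcal{R}_T[\tilde{p}]}{e^c}$, the last bounded by $C_{H,Q}^b\|\mathcal{R}_T[\tilde{p}]\|\,\enorm{\rho}$ via Cauchy--Schwarz and the norm equivalence $\|q\|\le C_{H,Q}^b\enorm{q}$ recalled in Remark~\ref{remark:elliptic_estimate}. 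Applying Young's inequality to each of the three products with the weights chosen so that their $\enorm{\rho}^2$-parts add up to $\tfrac12\enorm{\rho}^2$ (weight $\tfrac13$ each), splitting the purely data-dependent products $\SP{\partial_t\tilde{p}^d}{\varepsilon^c}$ and $\SP{\mathcal{R}_T[\tilde{p}]}{\varepsilon^c}$ symmetrically, and multiplying by $2$, yields the pointwise analogue of \eqref{eq:proof_abstract_estimate_rho_inequality},
\[
\partial_t\|e^c(t)\|^2 + \enorm{\rho(t)}^2 \le 4\,\enorm{\partial_t\tilde{p}^d(t)}_{Q,-1}^2 + C^2\,\enorm{\varepsilon^c(t)}^2 + 4\big(C_{H,Q}^b\big)^2\|\mathcal{R}_T[\tilde{p}](t)\|^2,
\]
with $C=(3\enorm{b}^2+2)^{1/2}$ exactly as stated.

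From here the argument is verbatim that of Theorem~\ref{thm:abstract_estimate}: integrate over $(0,T_\text{end})$, drop the non-negative term $\|e^c(T_\text{end})\|^2$, take square roots using $\sqrt{a_1+\dots+a_4}\le\sqrt{a_1}+\dots+\sqrt{a_4}$, and conclude with the triangle inequalities $\enorm{e}\le\enorm{\rho}+\enorm{\varepsilon}$ and $\enorm{\varepsilon^c}\le\enorm{\varepsilon}+\enorm{\tilde{p}^d}$ (using $e=\rho+\varepsilon$ and $\varepsilon^c=\varepsilon+\tilde{p}^d$), which gives \eqref{eq:fully_discrete_estimate}. Since the corollary is only a mild modification of Theorem~\ref{thm:abstract_estimate}, I expect no real obstacle; the points needing care are purely the bookkeeping of the Young weights, so that the three data terms emerge with the stated constants $2$, $C$ and $2C_{H,Q}^b$ after taking square roots, and the tacit temporal regularity ($e^c\in L^2(0,T_\text{end};Q)$, $\partial_t e^c\in L^2(0,T_\text{end};Q')$) that makes both the test $q=e^c(t)$ and the identity $\int_0^{T_\text{end}}\partial_t\|e^c\|^2=\|e^c(T_\text{end})\|^2-\|e^c(0)\|^2$ legitimate.
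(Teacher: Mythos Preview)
Your proposal is correct and follows exactly the route the paper indicates: derive the modified error identity with the additional source term $-\SP{\mathcal{R}_T[\tilde{p}](t)}{q}$, then rerun the proof of Theorem~\ref{thm:abstract_estimate} with the Young weights rebalanced (three $\enorm{\rho}$-products instead of two) to obtain the pointwise bound with constants $4$, $3\enorm{b}^2+2$, and $4(C_{H,Q}^b)^2$, and finish by integration and the two triangle inequalities. Your bookkeeping of the constants is right and your treatment of the residual term via $\|q\|\le C_{H,Q}^b\enorm{q}$ is exactly what the paper has in mind; the paper's own proof is merely the one-line statement that ``the same line of argument'' applies with the extra term, so your write-up is in fact more detailed than the original.
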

\begin{proof}
	Since \cref{eq:error_identity} no longer holds, we gain $\mathcal{R}_T[\tilde{p}](t)$ as
	an additional source term in the error equation:
	\begin{equation*}
		\DP{\partial_t e(t)}{q} + b(\rho(t), q)  = \SP{-\mathcal{R}_T[\tilde{p}](t)}{q}.
	\end{equation*}
	The statement follows using the same line of argument as in the proof of Theorem \ref{thm:abstract_estimate},
	taking the additional term into account.
\end{proof}

\begin{example}[Implicit Euler time stepping]
        \label{example:implicit_euler}
        Let $n_t \in \mathbb{N}$ be the number of time steps and $\Delta_t :=T_\text{end} / n_t$ the (fixed) time step size.
	Let $\tilde{p}(t)$ be the $\widetilde{Q}$-valued piecewise linear function with supporting points $\tilde{p}(n \cdot \Delta_t )=:\tilde{p}^n$,
	$n=0, \ldots n_t$, such that $\tilde{p}^0 := p(0)$ and $\tilde{p}^n$ is defined for $n > 0$ as the solution of
	\begin{equation*}
                \Big(\frac{\tilde{p}^{n}- \tilde{p}^{n-1}}{\Delta_t }, \tilde{q}\Big) + b(\tilde{p}^{n}, \tilde{q}) = \SP{f}{\tilde{q}} \qquad \forall \tilde{q} \in
		\widetilde{Q}.
	\end{equation*}
	We then have for $(n-1)\cdot \Delta_t  \leq t \leq n \cdot t$ the equality
	\begin{equation*}
		\mathcal{R}_T[\tilde{p}](t) = \frac{n \cdot \Delta_t  - t}{\Delta_t } B(\tilde{p}^n - \tilde{p}^{n-1}).
	\end{equation*}
	Thus,
	\begin{equation*}
    \|\mathcal{R}_T(\tilde{p})\|_{L^2(0, T_\text{end}; H)} = \left\{ \sum_{n=1}^{n_t} \frac{\Delta_t }{3} \|B(\tilde{p}^n - \tilde{p}^{n-1})\|^2
			\right\}^{1/2}.
	\end{equation*}
	Similarly, we obtain for the other quantities in \cref{eq:fully_discrete_estimate} the bounds
	\begin{align*}
    \|\varepsilon\|_{L^2(0, T_\text{end}; \enorm{\cdot})} &\leq 2 \left\{ \sum_{n=0}^{n_t} \frac{\Delta_t }{3}
		\enorm{\varepsilon^n}^2 \right\}^{1/2}, \\
    \|\tilde{p}^d\|_{L^2(0, T_\text{end}; \enorm{\cdot})} &\leq 2 \left\{ \sum_{n=1}^{n_t} \frac{\Delta_t }{3}
	\enorm{\tilde{p}^{d,n}}^2 \right\}^{1/2} ,\\
  \|\partial_t \tilde{p}^d\|_{L^2(0, T_\text{end}; \enorm{\cdot}_{Q,-1})} &\leq \left\{ \sum_{n=1}^{n_t} \frac{1}{\Delta_t }
\enorm{\tilde{p}^{d,n} - \tilde{p}^{d,n-1}}_{Q,-1}^2 \right\}^{1/2},
	\end{align*}
	where $\varepsilon^n:=\varepsilon(n \cdot \Delta_t )$, $\tilde{p}^{d,n}:=\tilde{p}^d(n \cdot \Delta_t )$, $0 \leq n \leq n_t$.
\end{example}

\begin{example}[Reduced basis approximation]
	We can directly apply Corollary \ref{thm:fully_discrete_estimate} to obtain a posteriori estimates for standard
	reduced basis schemes.
	In this case, $Q = H$ will be some discrete \lq truth\rq\ space and $\widetilde{Q} \subseteq Q$ the reduced approximation
	space.
	The $Q$ and $H$-norms might be, in case of a conforming approximation, the $H^1_0(\Omega)$ and $L^2(\Omega)$ norms for
	some domain $\Omega$.
	\Cref{eq:fully_discrete_estimate} then reduces to
	\begin{align*}
    \|e\|_{L^2(0, T_\text{end}; \enorm{\cdot})}
    \leq \|e(0)\| + (C+1)\cdot\|\varepsilon\|_{L^2(0, T_\text{end}; \enorm{\cdot})}+ 2C_{H,Q}^b\cdot\|\mathcal{R}_T(\tilde{p})\|_{L^2(0, T_\text{end}; H)}.
	\end{align*}
  The elliptic error $\enorm{\varepsilon}_{L^2(0, T_\text{end}; Q)}$ could be bounded using a standard residual-based error
	estimator for \cref{eq:elliptic_reconstruction}.
	For parametric problems with affine parameter dependency, all appearing terms are easily offline/online
	decomposed.
\end{example}

\section{Localized reduced basis methods}
\label{section:lrbms}

We now return to the definition of the localized RB method for parabolic problems as follows.

\noindent\textbf{The continuous problem.}
Let $\Omega \subset \R^d$ for $d = 1, 2, 3$ denote a bounded connected domain with polygonal boundary $\partial\Omega$ and, following the notation of Section~\ref{sec:introduction}, let $H = L^2(\Omega)$ and $Q = H^1_0(\Omega)$.
We consider problem \eqref{eq:intro:continuous_problem} with the parametric bilinear form $b$, defined over $Q$, as
\begin{align}
    b(p, q; \mu) = \int_\Omega (\lambda(\mu)\kappa_\eps\gradient{p})\cdot\gradient{q} &&\text{for } p, q \in H^1(\Omega), \mu \in \Params,
        \label{eq:lrbms:b_and_l}
\end{align}
given data functions $\kappa_\eps \in [L^\infty(\Omega)]^{d \times d}$ and $\lambda: \mathcal{P} \to L^\infty(\Omega)$.
For $\lambda$ and $\kappa_\eps$, such that $\lambda(\mu)\kappa_\eps \in [L^\infty(\Omega)]^{d \times d}$ is bounded from below (away from 0) and above for all $\mu \in \Params$, the bilinear form $b(\cdot, \cdot; \mu)$ is continuous and coercive with respect to $Q$ for all $\mu \in \Params$.
Thus, a unique solution $p(\cdot; \mu) \in L^2(0, T_\text{end}; Q)$ of problem \eqref{eq:intro:continuous_problem} exists for all $\mu \in \Params$, if $f$ is bounded.

We continue with the definition of the discretization in order to define the approximation space $\tilde{Q}$, to extend the definition of $b$ onto $\tilde{Q}$ and to introduce the relevant norms.

\medskip

The main idea of localized RB methods is to partition the physical domain $\Omega$ into subdomains in the spirit of domain decomposition methods and to generate a local reduced basis on each subdomain, as opposed to a single reduced basis with global support.
Coupling across subdomains is achieved by a symmetric weighted interior penalty discontinuous Galerkin (SWIPDG) scheme \cite{ESZ2009} for the high-dimensional as well as the reduced discretization.

\noindent\textbf{The discretization.}
To discretize \eqref{eq:intro:continuous_problem} we require two nested partitions of $\Omega$: a coarse one, $\Grid$ with elements (subdomains) $T \in \Grid$, and a fine one, $\grid$ with elements $t \in \grid$ (note that we use $t$ to denote elements of the computational grids, not to be confused with the time $t$).
Within each subdomain $T \in \Grid$ we allow for any local approximation of $Q$ and $b$ by discrete counterparts $Q_h^{k, T}$ and $b_h^T$ of order $k \geq 1$, associated with the local grid $\grid^T := T \cap \grid \subset \grid$.
In particular we consider (i) local conforming approximations by setting $Q_h^{k, T}$ to $\{ q_h \in C^0(T) \;|\; \restrict{q_h}{t} \in \Pk_k(t)\;\; \forall t \in \grid^T \} \subset H^1(T)$ and $b_h^T$ to $\restrict{b}{T}$, where $\Pk_k(\omega)$ denotes the space of polynomials over $\omega \subseteq \Omega$ of order up to $k \in \N$; (ii) local nonconforming approximations by setting $Q_h^{k, T}$ to $\{ q_h \in L^2(T) \;|\; \restrict{q_h}{t} \in \Pk_k(t)\;\; \forall t \in \grid^T \} \subset L^2(T)$ and $b_h^T$ to the following SWIPDG bilinear form: for $p, q \in Q_h^{k, T}$ and $\mu \in \Params$, we define
\begin{align*}
        b_h^T(p, q; \mu) := b^T(p, q; \mu) + \sum_{e \in \faces^T} b_e(p, q; \mu),
\end{align*}
with $b^T(p, q; \mu) := \int_T (\lambda(\mu)\kappa_\eps\gradient{p})\cdot\gradient{q}$, where $\faces^T$ denotes the set of all inner faces of $\grid^T$ that share two elements $t^-, t^+ \in \grid^T$.
The face bilinear form $b_e$ for any inner or boundary face $e$ of $\grid$ is given by $b_e(p, q; \mu) := b_c^e(q, p; \mu) + b_c^e(p, q; \mu) + b_p^e(p, q; \mu)$ with the coupling and penalty face bilinear forms $b_c^e$ and $b_p^e$ given by
\begin{align*}
  b_c^e(p, q; \mu) := \int_e -\mean{\lambda(\mu) \kappa_\eps \tilde{\Pi} \gradient{p}}_e \jump{q}_e
        &&\text{and}&&
        b_p^e(p, q; \mu) := \int_e \sigma_e(\mu) \jump{p}_e \jump{q}_e,
\end{align*}
respectively, with the $L^2$-orthogonal projection $\tilde{\Pi}$ from Definition \ref{def:elliptic_reconstruction}.
Given a function $q$ which is two-valued on faces, its jump and weighted average are given by $\jump{q}_e := q^- - q^+$ and $\mean{q}_e := \omega_e^- q^- + \omega_e^+ q^+$, respectively, on uniquely oriented inner faces $e = t^- \cap t^+$ for $t^\pm \in \grid$, and by $\jump{q}_e := \mean{q}_e := q$ for boundary faces $e = t^- \cap \partial\Omega$, with the locally adaptive weights given by $\omega_e^- := \delta_e^+ (\delta_e^+ + \delta_e^-)^{-1}$ and $\omega_e^+ := \delta_e^- (\delta_e^+ + \delta_e^-)^{-1}$, respectively, with $\delta_e^\pm := n_e \cdot \kappa_\eps^\pm \cdot n_e$.
Here, $n_e \in \R^d$ denotes the unique normal to a face $e$ pointing away from $t^-$ and $q^\pm := \restrict{q}{t^\pm}$.
The positive penalty function is given by $\sigma_e(\mu) := \sigma h_e^{-1} \mean{\lambda(\mu)}_e \sigma_\eps^e$, where $\sigma \geq 1$ denotes a user-dependent parameter, $h_e > 0$ denotes the diameter of a face $e$, and the locally adaptive weight is given by $\sigma_\eps^e := \delta_e^+ \delta_e^- (\delta_e^+ + \delta_e^-)^{-1}$ on inner faces and by $\sigma_\eps^e := \delta_e^-$ on boundary faces.

Given local approximations $Q_h^{k, T}$ and $b_h^T$ on each subdomain $T \in \Grid$, we define the DG space by $Q_h^k := \oplus_{T \in \Grid} Q_h^{k, T}$ and couple the local discretizations along a coarse face $E \in \Faces$, by SWIPDG fluxes to obtain the global bilinear form $b: \Params \to [Q_h^k \times Q_h^k \to \R]$, by
\begin{align*}
        b(p, q; \mu) := \sum_{T \in \Grid} b_h^T(p, q; \mu) + \sum_{E \in \Faces}\sum_{e \in \faces^E} b_e(p, q; \mu),
\end{align*}
for $p, q \in Q_h^k$, $\mu \in \Params$, where $\Faces$ denotes the set of all faces of the coarse grid $\Grid$ and where $\faces^E$ denotes the set of fine faces of $\grid$ which lie on a coarse face $E \in \Faces$.
Note that $b$ is continuous and coercive with respect to $Q_h^k$ in the DG norm $\enorm{\cdot}_{\cdot}$ (see the next section) if the penalty parameter $\sigma$ is chosen large enough (see \cite{OS2015} and the references therein concerning the choice of $\sigma$).

Depending on the choice of $\Grid$ and the local approximations, the above definition covers a wide range of discretizations, ranging from a standard conforming to a standard SWIPDG one; we refer to \cite{OS2015} for details.
The semi-discrete problem for a single parameter then reads as \eqref{eq:reduced_problem} with $\tilde{Q} = Q_h^k$.
Presuming $p_0 \in Q_h^k$ and using implicit Euler time stepping (compare Example \ref{example:implicit_euler}) the fully-discrete problem reads: for each time step $n > 0$ find the DoF vector of $p_h^n(\mu) := p_h(n \cdot \Delta_t , \mu) \in Q_h^k$, denoted by $\underline{p_h^n(\mu)} \in \R^{\dim Q_h^k}$, such that
\begin{align}
        \Big( \underline{M_h} + \Delta_t \; \underline{b(\mu)} \Big) \underline{p_h^n(\mu)} = \Delta_t \; \underline{f_h} + \underline{p_h^{n - 1}(\mu)},
        \label{eq:lrbms:fully_discrete}
\end{align}
where $\underline{M_h}, \underline{b(\mu)} \in \R^{\dim Q_h^k \times \dim Q_h^k}$ and $\underline{f_h} \in \R^{\dim Q_h^k}$ denote the matrix and vector representations of $(\cdot,\cdot)_{L^2(\Omega)}$, $b(\cdot,\cdot;\mu)$ and $(f,\cdot)_{L^2(\Omega)}$, respectively, with respect to the basis of $Q_h^k$.

\noindent\textbf{Model reduction.}
Let us assume that we are already given a reduced space $Q_\red \subset Q_h^k$ (we postpone the discussion of how to find $Q_\red$ to Section \ref{sec:numerical_example}).
Given $Q_\red$, we formally arrive at the reduced problem simply by Galerkin projection of \eqref{eq:lrbms:fully_discrete} onto $Q_\red$, just like traditional RB methods: for each time step $n > 0$ find the reduced DoF vector $\underline{p_\red^n(\mu)} \in \R^{\dim Q_\red}$, such that
\begin{align}
        \Big( \underline{M_\red} + \Delta_t \; \underline{b_\red(\mu)} \Big) \underline{p_\red^n(\mu)} = \Delta_t \; \underline{f_\red} + \underline{p_\red^{n - 1}(\mu)},
        \label{eq:lrbms:reduced}
\end{align}
with $p_\red^0(\mu) := \Pi_\red(p_0)$, where $\Pi_\red$ denotes the $L^2$-orthogonal projection onto $Q_\red$, and where $\underline{M_\red}, \underline{b_\red(\mu)} \in \R^{\dim Q_\red \times \dim Q_\red}$ and $\underline{f_\red} \in \R^{\dim Q_\red}$ denote the matrix and vector representations of $(\cdot,\cdot)_{L^2(\Omega)}$, $b(\cdot,\cdot;\mu)$ and $(f,\cdot)_{L^2(\Omega)}$, respectively, with respect to the basis of $Q_\red$.

As usual with RB methods, we can achieve an efficient offline/online splitting of the computational process by precomputing the restriction of the functionals and operators arising in \eqref{eq:lrbms:fully_discrete} to $Q_\red$, if those allow for an affine decomposition with respect to the parameter $\mu$.
For standard RB methods, where $Q_\red$ is spanned by reduced basis functions with global support, the matrix representation of the reduced $L^2$-product, for instance, would be given by $\underline{M_\red} = \underline{\Pi_\red} \cdot \underline{M_h} \cdot \underline{\Pi_\red}^\perp$, where $\underline{\Pi_\red} \in \R^{\dim Q_\red \times \dim Q_h^k}$ denotes the matrix representation of $\Pi_\red$ (each row of $\underline{\Pi_\red}$ corresponds to the DoF vector of one reduced basis function).
For localized RB methods, however, we are given a local reduced basis on each subdomain $T \in \Grid$ (reflected in the structure of the reduced space, $Q_\red = \oplus_{T \in \Grid} Q_\red^T$) and all operators and functionals are localizable with respect to $\Grid$.
Thus, the reduced basis projection can be carried out locally as well.
For instance, since $\SP{p}{q}_{L^2(\Omega)} = \sum_{T \in \Grid} \SP{\restrict{p}{T}}{\restrict{q}{T}}_{L^2(T)}$, we locally obtain $\underline{M_\red^T} = \underline{\Pi_\red^T} \cdot \underline{M_h^T} \cdot \underline{\Pi_\red^T}^\perp \in \mathbb{R}^{\dim Q_\red^T \times \dim Q_\red^T}$ for all $T \in \Grid$, where $\underline{\Pi_\red^T} \in \R^{\dim Q_\red^T \times \dim Q_h^{k, T}}$ and $\underline{M_h^T} \in \R^{\dim Q_h^{k, T} \times \dim Q_h^{k, T}}$ denote the matrix representations of the local $L^2$-orthogonal reduced basis projection and $\SP{\cdot}{\cdot}_{L^2(T)}$, respectively.
The reduced $L^2$-product matrix $\underline{M_\red} \in \R^{\dim Q_\red \times \dim Q_\red}$, with $\dim Q_\red = \sum_{T \in \grid} \dim Q_\red^T$, is then assembled by combining the local matrices using a standard DG mapping with respect to $Q_\red$.
In the same manner, the reduction of $b$ can be carried out locally by projecting the local bilinear forms on each subdomain as well as the coupling bilinear forms with respect to all neighbors, yielding sparse reduced operators and products; we refer to \cite{OS2015} for details and implications.

\section{Error analysis}
\label{sec:application_to_lrbms}

For our analysis we introduce the broken Sobolev space $H^1(\grid) := \big\{ q \in L^2(\Omega) \;\big|\; \restrict{q}{t}
\in H^1(t)\;\; \forall t \in \grid \big\}$, containing $Q + \tilde{Q}$, since $Q_\red \subset Q_h^k \subset H^1(\grid) \subset L^2(\Omega)$ and $H^1(\Omega) \subset H^1(\grid)$.
Note that the domain of all operators, products and functionals of the previous section can be naturally extended to $H^1(\grid)$, for instance by using the broken gradient operator $\nabla_h$, which is locally defined by $\restrict{(\nabla_h q)}{t} := \nabla(\restrict{q}{t})$ for all $t \in \grid$.
Using said operator in the definition of $b^T$, we define the parametric energy semi-norm (which is a norm only on $H^1_0(\Omega)$) by $|q|_\mu := \big(\sum_{T \in \Grid} b^T(q, q; \mu)\big)^{1/2}$ and the parametric DG norm by $\enorm{q}_\mu := \big(\sum_{T \in \Grid} b^T(q, q; \mu) + \sum_{e \in \faces} b_p^e(q, q; \mu) \big)^{1/2}$, for $\mu \in \Params$ and $q \in H^1(\grid)$, respectively, where $\faces$ denotes the set of all faces of $\grid$.
Note that $\enorm{q}_\mu = |q|_\mu$ for $q \in H^1_0(\Omega)$.

Since we presume $\lambda$ to be affinely decomposable with respect to $\mu$, there exist $\Xi \in \N$ strictly positive coefficients $\theta_\xi: \Params \to \R$ and nonparametric components $\lambda_\xi \in L^\infty(\Omega)$, such that $\lambda(\mu) = \sum_{\xi = 1}^\Xi \theta_\xi(\mu) \lambda_\xi$.
We can thus compare $\lambda$, and in particular $\enorm{\cdot}_\cdot$, for two parameters by means of $\alpha(\mu, \overline{\mu}) := \min_{\xi = 1}^\Xi \theta_\xi(\mu) \theta_\xi(\overline{\mu})^{-1}$ and $\gamma(\mu, \overline{\mu}) := \max_{\xi = 1}^\Xi \theta_\xi(\mu) \theta_\xi(\overline{\mu})^{-1}$:
\begin{align}
        \alpha(\mu, \overline{\mu})^{1/2} \enorm{\cdot}_{\overline{\mu}} \leq \enorm{\cdot}_\mu \leq \gamma(\mu, \overline{\mu})^{1/2} \enorm{\cdot}_{\overline{\mu}}.
        \label{eq:lrbms:norm_equivalence}
\end{align}
Note that since we consider an energy norm here, usage of the above norm equivalence requires no additional offline computations, in contrast to the standard min-$\theta$ approach \cite{PR2006}, where continuity and coercivity constants of $b(\cdot, \cdot; \overline{\mu})$ need to be computed when considering the $H^1_0$-norm.
We also denote by $c_\eps(\mu) > 0$ the minimum over $x \in \Omega$ of the smallest eigenvalue of the matrix $\lambda(x; \mu)\kappa_\eps(x) \in \R^{d \times d}$.

We are interested in a fully computable and offline/online decomposable estimate on the full approximation error in a fixed energy norm.
Therefore, we use the general framework presented in Section \ref{sec:general_framework} and apply it to the parametric setting of the localized RB method.
Since we use an implicit Euler time stepping for the reduced scheme we can readily apply Corollary \ref{thm:fully_discrete_estimate} and Example \ref{example:implicit_euler} by specifying all arising terms.

Given any discontinuous function $q_\red(\mu) \in Q_\red \subset Q_h^k \not\subset H^1_0(\Omega)$, we use the Oswald interpolation operator $I_\text{OS}: Q_h^k \to Q_h^k \cap H^1_0(\Omega)$, which consists of averaged evaluations of its source at Lagrange points of the grid $\grid$ (compare \cite[Section 4]{OS2015} and the references therein), to compute the conforming and non-conforming parts of a function by $q_\red^c := I_\text{OS}(q_\red)$ and $q_\red^d := q_\red - q_\red^c$, respectively.
Following Remark \ref{remark:elliptic_estimate}, we estimate the elliptic reconstruction error, $\enorm{\eps(n\cdot \Delta_t )}_\mu$, by the localizable and offline/online decomposable a posteriori error estimate $\eta(p_\red(n\cdot \Delta_t ; \mu); \mu, \mu, \tilde{\mu})$ from \cite[Corollary 4.5]{OS2015}, where $\tilde{\mu} \in \Params$ denotes any fixed parameter.

Since $b$ reduces on $H^1_0(\Omega)$ to the symmetric bilinear form \eqref{eq:lrbms:b_and_l}, we have $\enorm{b(\cdot, \cdot; \mu)}_{\mu} = 1$ for any $\mu \in \Params$.
Denoting the Poincar\'{e} constant with respect to $\Omega$ by $C_P^\Omega > 0$, we can estimate
$\norm{\Pi_\red(q)}_{L^2(\Omega)} \leq C_P^\Omega c_\eps(\mu)^{-1} \enorm{q}_\mu$ for any $\mu \in \Params$, $q \in
H^1_0(\Omega)$ and $\enorm{\partial_t p_\red^d(t)}_{\mu, Q, -1} \leq C_P^\Omega c_\eps(\mu)^{-1} \norm{\partial_t p_\red^d(t)}_{L^2(\Omega)}$ for $\partial_t p_\red^d \in L^2(0, T_\text{end}; Q_\red)$ and $\mu \in \Params$.
We thus obtain the following estimate by applying Corollary \ref{thm:fully_discrete_estimate} and Example \ref{example:implicit_euler} using the energy norm $\enorm{\cdot}_\mu$ and the norm equivalence \eqref{eq:lrbms:norm_equivalence}.

\begin{corollary}
\label{corollary:lrbms:estimate}
        Let the two partitions $\grid$ and $\Grid$ of $\Omega$ fulfill the requirements of \cite[Theorem 4.2]{OS2015}, namely: let $\grid$ be shape regular without hanging nodes and fine enough, such that all data functions can be assumed polynomial on each $t \in \grid$; let the subdomains $T \in \Grid$ be shaped, such that a local Poincar\'{e} inequality for functions in $H^1(T)$ with zero mean holds.
        For $\mu \in \Params$ let $p(\cdot; \mu) \in L^2(0, T_\text{end}; H^1_0(\Omega))$ denote the weak solution of the parabolic problem \eqref{eq:intro:continuous_problem} and let $p_\red(\cdot; \mu) \in L^2(0, T_\text{end}; Q_\red)$ denote the reduced solution of the fully-discrete problem \eqref{eq:lrbms:reduced}, where the constant function 1 is present in all local reduced bases spanning $Q_\red$.
        It then holds for arbitrary $\hat{\mu}, \overline{\mu}, \tilde{\mu} \in \Params$, that
        \begin{align*}
          \norm{p(\mu) - p_\red(\mu)}_{L^2(0, T_\text{end}; \enorm{\cdot}_{\overline{\mu}})}\\
                        \leq \alpha(\mu, \overline{\mu})^{-1/2} \Big\{\hspace{10pt}
                          &\norm{e^c(0; \mu)}_{L^2(\Omega)} \;+\; \sqrt{5}\; \norm{p_\red^d(\mu)}_{L^2(0, T_\text{end}; \enorm{\cdot}_\mu)}\\
                               +\; &2\, \alpha(\mu, \hat{\mu})^{-1}\, C_{H,Q}^b(\hat{\mu})\; \norm{\partial_t p_\red^d(\mu)}_{L^2(0, T_\text{end}; L^2(\Omega))}\\
                               +\; &(\sqrt{5} + 1)\; \eta_\textnormal{ell.}(p_\red(\mu), \mu, \tilde{\mu})\\
                               +\; &2\, \alpha(\mu, \hat{\mu})^{-1}\, C_{H,Q}^b(\hat{\mu})\; \norm{\mathcal{R}_T(p_\red(\mu); \mu)}_{L^2(0, T_\text{end}; L^2(\Omega))}
                        \Big\}\\
                        &\hspace{-79pt} =: \eta_{h, \red}(p_\red(\mu); \mu, \hat{\mu}, \overline{\mu}, \tilde{\mu})
        \end{align*}
	with $C_{H,Q}^b(\hat{\mu}) = C_P^\Omega c_\eps(\hat{\mu})^{-1}$ and
        \begin{align*}
          \eta_\textnormal{ell.}(p_\red(\mu); \mu, \tilde{\mu})^2 := \frac{4\Delta_t }{3} \sum_{n = 0}^{n_t} \Big\{ &\eta_\textnormal{OS2015}(p_\red(n\cdot \Delta_t ; \mu); \mu, \mu, \tilde{\mu})\\
        &+ \sum_{e \in \faces} b_p^e(p_\red(n\cdot \Delta_t ; \mu), p_\red(n\cdot \Delta_t ; \mu); \mu) \Big\}
        \end{align*}
        where $\eta_\textnormal{OS2015}$ denotes the estimate $\eta$ from \cite[Corollary 4.5]{OS2015}.
\end{corollary}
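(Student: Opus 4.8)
The plan is to obtain the estimate as a concrete instantiation of Corollary~\ref{thm:fully_discrete_estimate} together with Example~\ref{example:implicit_euler}, applied in the parametric SWIPDG/localized-RB setting of Section~\ref{section:lrbms}. First I would fix $\mu \in \Params$ and invoke the abstract framework of Section~\ref{sec:general_framework} with $H := L^2(\Omega)$, $Q := H^1_0(\Omega)$, $\widetilde{Q} := Q_\red$ (so that the $H$-orthogonal projection onto $\widetilde Q$ is the $L^2(\Omega)$-projection $\Pi_\red$, and $Q_\red \subset Q_h^k \subset H^1(\grid)$ is in general nonconforming), $b := b(\cdot,\cdot;\mu)$, energy norm $\enorm{\cdot} := \enorm{\cdot}_\mu$, and conforming/nonconforming splitting $p_\red^c := I_\text{OS}(p_\red)$, $p_\red^d := p_\red - p_\red^c$. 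The hypotheses of that framework have to be verified: $b(\cdot,\cdot;\mu)$ is continuous and coercive on $H^1_0(\Omega)$ and, by \eqref{eq:lrbms:b_and_l}, symmetric there, so $\enorm{\cdot}_\mu$ restricted to $H^1_0(\Omega)$ is indeed the square root of the symmetric part of $b(\cdot,\cdot;\mu)$; coercivity of $b(\cdot,\cdot;\mu)$ on $Q_h^k$ in the DG norm holds for the penalty parameter $\sigma$ large enough by the SWIPDG theory referenced in \cite{OS2015}. Since $b(\cdot,\cdot;\mu)$ restricted to $H^1_0(\Omega)$ coincides with the energy form, its $\enorm{\cdot}_\mu$-continuity constant is $\enorm{b(\cdot,\cdot;\mu)}_\mu = 1$, so the constant of Corollary~\ref{thm:fully_discrete_estimate} is $C = (3\cdot 1^2 + 2)^{1/2} = \sqrt{5}$, which is precisely the constant $\sqrt{5}$ multiplying $\norm{p_\red^d(\mu)}$ and the constant $C+1 = \sqrt{5}+1$ multiplying $\eta_\textnormal{ell.}$ in the claimed estimate.

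Next I would specialise each term of \eqref{eq:fully_discrete_estimate}. Since $p_\red(\mu)$ is the piecewise-linear-in-time interpolant of the implicit Euler reduced scheme \eqref{eq:lrbms:reduced}, the time-stepping residual $\mathcal{R}_T(p_\red(\mu);\mu)$ and the remaining time-discrete quantities are given by the post-processing formulas of Example~\ref{example:implicit_euler}; the only feature not covered there is the non-matching initial value $p_\red^0 = \Pi_\red(p_0)$, which is absorbed by the term $\norm{e^c(0)} = \norm{p(0;\mu) - I_\text{OS}(\Pi_\red(p_0))}_{L^2(\Omega)}$, i.e.\ $\norm{e^c(0;\mu)}_{L^2(\Omega)}$, that Corollary~\ref{thm:fully_discrete_estimate} keeps explicit. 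Following Remark~\ref{remark:elliptic_estimate}, the dual-norm term $2\norm{\partial_t p_\red^d(\mu)}_{L^2(0,T_\text{end};\enorm{\cdot}_{\mu,Q,-1})}$ and the residual term $2\,C_{H,Q}^b\,\norm{\mathcal{R}_T(p_\red(\mu);\mu)}_{L^2(0,T_\text{end};H)}$ of \eqref{eq:fully_discrete_estimate} are estimated through $\norm{\cdot}_{L^2(\Omega)}$ using the Poincar\'{e}/embedding inequalities collected just before the statement; transferring their $c_\eps$-dependent constants from $\mu$ to the fixed parameter $\hat{\mu}$ by the min-$\theta$ norm equivalence \eqref{eq:lrbms:norm_equivalence} yields exactly the factors $2\,\alpha(\mu,\hat{\mu})^{-1}\,C_{H,Q}^b(\hat{\mu})$ with $C_{H,Q}^b(\hat{\mu}) = C_P^\Omega c_\eps(\hat{\mu})^{-1}$.

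The technical heart is the elliptic reconstruction term $\norm{\varepsilon(\mu)}_{L^2(0,T_\text{end};\enorm{\cdot}_\mu)}$. By Proposition~\ref{thm:elliptic_reconstruction}, at each time node $n\cdot\Delta_t$ the function $p_\red(n\cdot\Delta_t;\mu)$ is the $Q_h^k$-Galerkin approximation of the elliptic reconstruction problem \eqref{eq:elliptic_reconstruction}, whose right-hand side $B(p_\red(n\cdot\Delta_t;\mu)) - \Pi_\red(f) + f$ is again affinely $\mu$-decomposable, hence offline/online decomposable, since $B(\cdot)$ is affine through $b$. By Remark~\ref{remark:elliptic_estimate} any a posteriori estimator for this elliptic diffusion problem therefore bounds $\enorm{\varepsilon(n\cdot\Delta_t;\mu)}_\mu$, and I would use $\eta_\textnormal{OS2015}$ from \cite[Corollary~4.5]{OS2015}; its applicability needs exactly the hypotheses imposed in the statement (shape-regular $\grid$ without hanging nodes, data functions polynomial on each $t\in\grid$, a subdomain Poincar\'{e} inequality, and the constant $1$ in each local reduced basis, so that $Q_\red$ contains all functions constant on the subdomains of $\Grid$). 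The key observation is that $\Ell(p_\red(n\cdot\Delta_t;\mu)) \in H^1_0(\Omega)$ has vanishing jumps, so that $\enorm{\varepsilon(n\cdot\Delta_t;\mu)}_\mu^2 = |\varepsilon(n\cdot\Delta_t;\mu)|_\mu^2 + \sum_{e\in\faces} b_p^e(\varepsilon(n\cdot\Delta_t;\mu),\varepsilon(n\cdot\Delta_t;\mu);\mu)$ and $\jump{\varepsilon(n\cdot\Delta_t;\mu)}_e = -\jump{p_\red(n\cdot\Delta_t;\mu)}_e$; hence the penalty part equals $\sum_{e\in\faces} b_p^e(p_\red(n\cdot\Delta_t;\mu),p_\red(n\cdot\Delta_t;\mu);\mu)$ exactly, while the broken-energy part $|\varepsilon(n\cdot\Delta_t;\mu)|_\mu^2$ is controlled by $\eta_\textnormal{OS2015}(p_\red(n\cdot\Delta_t;\mu);\mu,\mu,\tilde{\mu})$. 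Feeding these nodal bounds into the estimate $\norm{\varepsilon(\mu)}_{L^2(0,T_\text{end};\enorm{\cdot}_\mu)} \le 2\big(\sum_{n=0}^{n_t}\tfrac{\Delta_t}{3}\enorm{\varepsilon(n\cdot\Delta_t;\mu)}_\mu^2\big)^{1/2}$ of Example~\ref{example:implicit_euler} reproduces exactly $\eta_\textnormal{ell.}(p_\red(\mu);\mu,\tilde{\mu})$.

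Finally I would collect all pieces in the norm $\enorm{\cdot}_\mu$ (i.e.\ apply Corollary~\ref{thm:fully_discrete_estimate} with the above specialisations) and convert the left-hand side to the requested norm $\enorm{\cdot}_{\overline{\mu}}$ via $\alpha(\mu,\overline{\mu})^{1/2}\enorm{\cdot}_{\overline{\mu}} \le \enorm{\cdot}_\mu$ from \eqref{eq:lrbms:norm_equivalence}, which produces the outer prefactor $\alpha(\mu,\overline{\mu})^{-1/2}$. I expect the main obstacle to be in the middle of the third step: cleanly justifying that the \emph{elliptic} a posteriori estimator of \cite{OS2015} may be reused verbatim for the reconstruction problem \eqref{eq:elliptic_reconstruction} — which rests on the identification of Proposition~\ref{thm:elliptic_reconstruction} and on its modified right-hand side retaining an affine, hence offline/online decomposable, structure — and in correctly isolating the jump/penalty contribution of $\varepsilon(n\cdot\Delta_t;\mu)$ so that it falls back onto $p_\red(n\cdot\Delta_t;\mu)$. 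By comparison, the remaining bookkeeping (the several $\alpha(\mu,\cdot)$, $c_\eps(\cdot)$ and Poincar\'{e} factors, and the $\tfrac{\Delta_t}{3}$ time-weights of Example~\ref{example:implicit_euler}) is routine.
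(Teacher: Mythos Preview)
Your proposal is correct and follows essentially the same route as the paper: instantiate Corollary~\ref{thm:fully_discrete_estimate} and Example~\ref{example:implicit_euler} in the $\enorm{\cdot}_\mu$ norm with $\widetilde{Q}=Q_\red$, use the Oswald interpolant for the conforming/nonconforming split, observe $\enorm{b(\cdot,\cdot;\mu)}_\mu=1$ so that $C=\sqrt{5}$, bound the elliptic reconstruction error via $\eta_\textnormal{OS2015}$ plus the penalty contribution (which reduces to the jumps of $p_\red$ since $\Ell(p_\red)\in H^1_0(\Omega)$), and finally pass to $\enorm{\cdot}_{\overline{\mu}}$ and to $C_{H,Q}^b(\hat{\mu})$ via the norm equivalence~\eqref{eq:lrbms:norm_equivalence}. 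The paper's own proof is in fact just the sentence preceding the corollary together with the paragraph above it; your write-up supplies exactly the details that sentence leaves implicit.
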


In Corollary \ref{corollary:lrbms:estimate}, we have the flexibility to choose three parameters $\hat{\mu}, \overline{\mu}, \tilde{\mu} \in \Params$: the parameter $\overline{\mu}$ can be used to fix a norm throughout the computational process (for instance during the greedy basis generation), while the purpose of the parameters $\hat{\mu}$ and $\tilde{\mu}$ is to allow all quantities to be offline/online decomposable, cf. \cite{OS2015}.
The price to pay for this flexibility are the additional occurrences of $\alpha$, which are equal to $1$ in the nonparametric case or if the parameters coincide.

\section{Numerical Experiments}
\label{sec:numerical_example}

We consider \eqref{eq:intro:continuous_problem} on $\Omega = [0, 5] \times [0, 1]$, $T_\text{end} = 0.05$, with $p_0 = 0$ and the data functions $f$, $\kappa$ and $\lambda$ from the multiscale example in \cite[Section 6.1]{OS2015}: $\kappa_\eps$ is the highly heterogeneous permeability tensor used in the first model of the 10th SPE Comparative Solution Project\footnote{\url{http://www.spe.org/web/csp/index.html}}, $f$ models a source and two sinks and $\lambda(\mu) := 1 + (1 - \mu) \lambda_c$, where $\lambda_c$ models a high-conductivity channel.
The role of the parameter $\mu \in \Params := [0.1, 1]$ is thus to toggle the existence of the channel, the maximum contrast of $\lambda(\mu)\kappa_\eps$ amounts to $10^6$ (compare Figure \ref{figure:data_functions}).

\begin{figure}[t]
  \centering%
  \footnotesize%
  \includegraphics{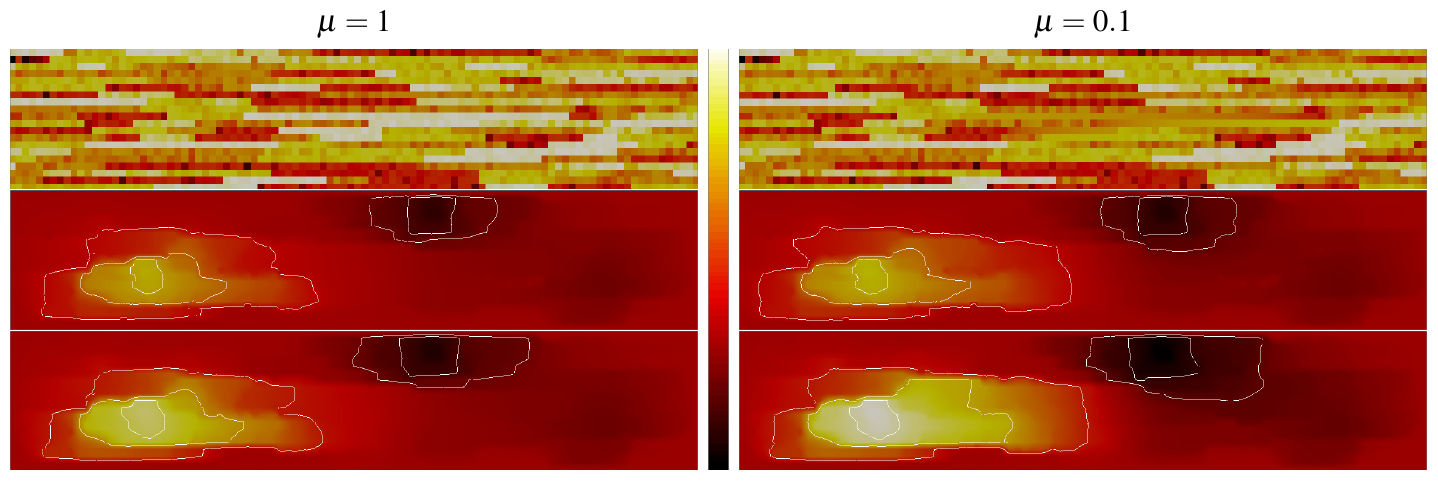}
  \caption{%
    Data functions and sample solutions on a grid with $|\grid| = 8,000$ simplices for parameters $\mu = 1$ (left column) and $\mu = 0.1$ (right column).
    Both plots in the first row as well as the bottom four plots share the same color map (middle) with two different ranges.
    First row: logarithmic plot of $\lambda(\mu)\kappa_\eps$ (dark: $1.41\cdot 10^{-3}$, light: $1.41\cdot 10^3$).
    Rest: plot of the pressure $p_h(t;\mu)$ (solution of \eqref{eq:reduced_problem}, dark: $-3.92\cdot 10^{-1}$, light: $7.61\cdot 10^{-1}$, isolines at 10\%, 20\%, 45\%, 75\% and 95\%) for $t = 0.01$ (middle row) and the end time $t = T_\text{end} = 0.05$ (bottom row).
    Note the presence of high-conductivity channels in the permeability (top left, light regions) throughout large parts of the domain.
    The parameter dependency models a removal of one such channel in the middle right of the domain.
    }
    \label{figure:data_functions}
\end{figure}

\noindent\textbf{Basis generation.}
On each subdomain $T \in \Grid$ we initialize the local reduced basis with $\varphi_\red^T := $\texttt{gram\_schmidt(}$\{ 1, \restrict{f}{T}\}$\texttt{)}, where \texttt{gram\_schmidt} denotes the Gram-Schmidt orthonormalization procedure (including re-orthonormalization for numerical stability) with respect to the full $H^1(T)$ product from our software package \pymor (see below).
The constant function $1$ has to be present in the local reduced bases according to \cite[Theorem 4.2]{OS2015} (to guarantee local mass conservation w.r.t.~subdomains), while the presence of $f$ sharpens the a posteriori estimate by minimizing $\Pi_\red(f) - f$, as motivated by the elliptic reconstruction \eqref{eq:elliptic_reconstruction}.
We iteratively extend these initial bases using a variant of the \textsc{POD-Greedy} algorithm \cite{HO2008}: in each iteration (i) the worst approximated parameter, say $\mu_* \in \Params_\text{train}$, is found by evaluating the a posteriori error estimate from Corollary \ref{corollary:lrbms:estimate} over a set of training parameters $\Params_\text{train} \subset \Params$; (ii) a full solution trajectory $\{p_h(n\cdot \Delta_t ; \mu_*) \,|\, 0 \leq n \leq n_t\}$ is computed using the discretization from Section \ref{section:lrbms}; and (iii) the local reduced bases $\varphi_\red^T$ for each subdomain $T \in \Grid$ are extended by the dominant POD mode of the projection error of $\{\restrict{p_h(n\cdot \Delta_t ; \mu_*)}{T} \,|\, 0 \leq n \leq n_t\}$, using the above Gram-Schmidt procedure.

\noindent\textbf{Software implementation.}
We use the open-source Python software package \pymor\footnote{\url{http://pymor.org}} \cite{MRS2015} for all model reduction algorithms as well as for the time stepping.
For the grids, operators, products and functionals we use the open-source \Cpp software package \DUNE, in particular the generic discretization toolbox \texttt{dune-gdt}\footnote{\url{http://github.com/dune-community/dune-gdt}} (see \cite[Section 6]{OS2015} and the references therein), compiled into a Python module to be directly usable in \pymor's algorithms.

We use a simplicial triangulation for the fine grid $\grid$, rectangular subdomains $T \in \Grid$ and $10$ equally sized time steps for the implicit Euler scheme.
Within each subdomain we use a local DG space of order $1$, the resulting discretization thus coincides with the one proposed in \cite{ESZ2009}.

\begin{SCfigure}[10]
  \footnotesize%
  \centering%
  \includegraphics{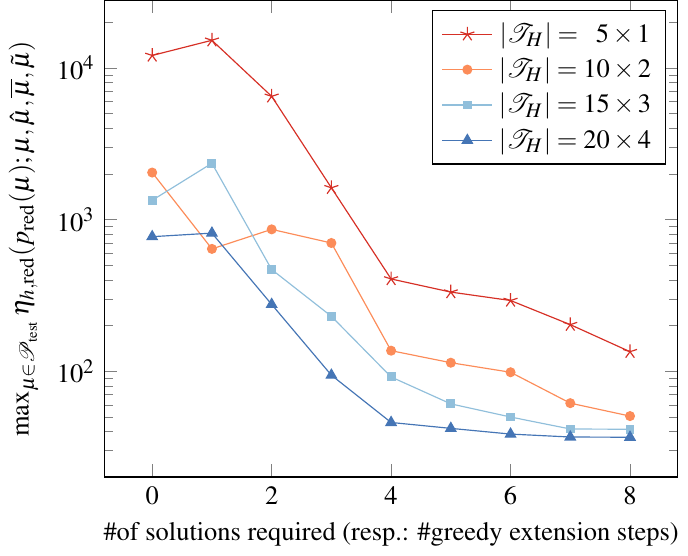}
  \caption{%
        Estimated error evolution during the \textsc{POD-Greedy} basis generation for several subdomain configurations and $\hat{\mu} = \overline{\mu} = \tilde{\mu} = 0.1$, to minimize all occurrences of $\alpha$ in Corollary \ref{corollary:lrbms:estimate}.
        Depicted is the maximum estimated error over a set of ten randomly chosen test parameters $\Params_\textnormal{test} \subset \Params$ in each step of the greedy algorithm, which was configured to search over ten uniformly distributed training parameters.%
  }
  \label{figure:greedy}
\end{SCfigure}

We observe a comparable decay of the estimated error during the greedy basis generation in Figure \ref{figure:greedy} for all subdomain configurations, though faster for a larger number of subdomains $|\Grid|$, where the reduced space is much richer.
In particular, to reach the same prescribed error tolerance in the greedy algorithm, much less solution snapshots are required for larger numbers of subdomains.
We refer to \cite[Section 3.3]{ORS2016} for a comparison of localized RB methods versus traditional RB methods.

\section{Conclusion}
\label{sec:conclusion}
In this contribution we used the elliptic reconstruction technique for a posteriori error estimation of parabolic problems \cite{MakridakisNochetto2003,LakkisMakridakis2006,DemlowLakkisEtAl2009,GeorgoulisLakkisEtAl2011} to derive efficient and reliable true error control for the localized reduced basis method applied to scalar linear parabolic problems.
Numerical experiments were given to demonstrate the applicability of the approach.

{\small%

}

\end{document}